\newcommand*{\affaddr}[1]{#1}
\newcommand*{\affmark}[1][*]{\textsuperscript{#1}}
\newtheorem{theorem}{Theorem}
\theoremstyle{theorem}
\newtheorem{lemma}[theorem]{Lemma}
\numberwithin{equation}{section}
\begin{document}
\title[On Random Fourier-Hermite Transform associated with stochastic process ]
{ On Random Fourier-Hermite Transform associated with stochastic process }

\author[B. Mangaraj, S. Sahoo ]
{B\lowercase{haratee} M\lowercase{angaraj}\affmark[1] \lowercase{and}
S\lowercase{abita} S\lowercase{ahoo}\affmark[2]\\
\affaddr{\affmark[1] D\lowercase{epartment} \lowercase{of} M\lowercase{athematics}, S\lowercase{ambalpur} U\lowercase{niversity}, O\lowercase{disha}, I\lowercase{ndia}}\\
\affaddr{\affmark[2] D\lowercase{epartment} \lowercase{of} M\lowercase{athematics}, S\lowercase{ambalpur} U\lowercase{niversity}, O\lowercase{disha}, I\lowercase{ndia}.
}\\
{\affmark[1] \lowercase{mangarajbharatee@suniv.ac.in} }\\
{\affmark[2] \lowercase{sabitamath@suniv.ac.in} } \\
 }
\date{}

 \subjclass[2010]{Primary: 42A38;  Secondary:
40G15}
\begin{abstract}
Liu and Liu in 2007 introduced the Fourier - Hermite transform $\sum a_{n}\lambda_{n}^{R}\psi_{n}(t)$ which is a random Fourier - Hermite series with random variables $\lambda_{n}^{R}$ choosen randomly from the unit circle of $\mathbb{C}$, where $\psi_{n}(t)$ are Hermite functions and $a_{n}$ are Fourier - Hermite coefficients of an $L^{2}(\mathbb{R})$ function. They used it in image encryption and decryption and expected its application in general signal and image processing. This motivated us to investigate more on random Fourier - Hermite transform by replacing the random variables $\lambda_{n}^{R}$ by some other random variables. It leads to address two problems. First to focus on convergence of random Fourier - Hermite series. Secondly to investigate on finding Fourier transform of the sum function of these random Fourier - Hermite series. The random variables those has been choosen are Fourier - Hermite coefficients of stochastic process. They are independent if associated with Wiener process and dependent if associated with symmetric stable process. The scalars $a_{n}$ are Fourier - Hermite coefficients of functions of suitable $L^{p}$ spaces. The Fourier transform of the sum functions are found out which is possible in case of $p = 2$ only.
\end{abstract}
\keywords{Symmetric stable process, Stochastic integral, Random Fourier-Hermite series, Convergence in mean, Convergence in probability.}

\maketitle{}
\section{Introduction}
\setcounter{equation}{0}
Let $\Omega$ be a sample space with elements $\omega$, $\mathcal{B}$ be a Borel field of sets $E$ contained in $\Omega$, $P(E)$ be the probability measure on $\mathcal{B}$. Let $W(\delta, \omega)$ for $(\delta, \omega) \in \mathbb{R}\times\Omega$ be the Wiener process. Hunt\cite{3} in 1951 could define stochastic integral $\int_{-\infty}^{\infty}f(\delta)W(d\delta, \omega)$ for $f \in L^{2}(\mathbb{R})$ as a limit of Riemann - Stieltjes sum which is a random variable. This led him to define random Fourier transform
\begin{eqnarray*}
\int_{-\infty}^{\infty}e^{i\delta t}f(\delta)dW(\delta, \omega)
\end{eqnarray*}
 for $f \in L^{2}(\mathbb{R})$, associated with Wiener process.
 In 2007, Liu and Liu \cite{9} introduced  random Fourier transform in Hermite polynomials. He used the Fourier - Hermite series expansion of $f \in L^{2}(\mathbb{R})$ and introduced the random Fourier transform as a series $\sum a_{n}\lambda_{n}^{R}\psi_{n}(t)$ in Hermite functions $\psi_{n}(t)$, where $\lambda_{n}^{R}$ are choosen randomly from the unit circle in $\mathbb{C}$, $a_{n}$ are the Fourier - Hermite coefficients of $f \in L^{2}(\mathbb{R})$. They used it in image encryption and decryption. They expects that this concept of RFT will be significant not only for digital and optical image encryption but also in general signal and image processing. Nayak, Pattanayak and Mishra\cite{13}, and Pattanayak and Sahoo\cite{15} have established the convergence of random series $\sum_{n = -\infty}^{\infty}a_{n}r_{n}(\omega)e^{int}$ in the sense of probability and mean respectively, to the stochastic integral $\int_{0}^{2\pi}f(t-s)dX(t, \omega)$ if $X(t, \omega)$ is a symmetric stable process of index $\gamma$, $1 \leq \gamma \leq 2$, $r_{n}(\omega)$ are Fourier coefficients of $X(t, \omega)$ and $a_{n}$ are Fourier coefficients of $f \in L^{p}[0, 2\pi]$, $p \geq \gamma$. The sum function of these series are now random functions. The wide spread application of classical orthogonal polynomials and importance of RFS in signal processing, optics etc motivated us to find the Fourier transform of random functions associated with orthogonal Hermite polynomials.

 The Hermite polynomials $H_{n}(t)$ of degree $n$ are defined as $H_{n}(t) = (-1)^{n}e^{t^{2}}(\frac{d}{dt})^{n}\{e^{-t^{2}}\}$. The Hermite polynomials are mutually orthogonal with respect to the weighted inner product with the weight function $e^{-t^{2}}$, i.e.
\begin{eqnarray}\label{1.4a}
\int_{-\infty}^{\infty}H_{m}(t)H_{n}(t)e^{-t^{2}}dt = \sqrt{\pi}2^{\frac{n}{2}}n!\delta_{mn}
\end{eqnarray}
where $\delta_{mn}$ is the Kronecher's delta function.
The normalized Hermite functions, sometimes called the Hermite - Gaussian functions of degree $n \in \mathbb{N}_{0}$ \cite{5a, 5b, 5c, 13a} or simply Hermite functions are defined as
\begin{eqnarray}\label{2a}
\psi_{n}(t) := \frac{1}{\sqrt{2^{n}n!\sqrt{\pi}}}H_{n}(t)e^{-\frac{t^{2}}{2}}, n \geq 0, t \in \mathbb{R},
\end{eqnarray}
so that
\begin{eqnarray}\label{2b}
\int_{-\infty}^{\infty}\psi_{m}(t)\psi_{n}(t)dt = \delta_{mn}.
\end{eqnarray}
The Hermite functions $\psi_{n}(t)$ form a complete orthonormal basis on the Hilbert space of square integrable functions\cite{12} and $L^{1} \bigcap L^{2}$\cite{1f} on the real line $\mathbb{R}$.
Thus to each $f \in L^{2}(\mathbb{R})$ we have an associated Fourier - Hermite expansion
\begin{equation}\label{2aad}
f(t) := \sum_{k =0}^{\infty}a_{k}\psi_{k}(t),
\end{equation}
where
\begin{eqnarray}\label{1.8a}
a_{k} := \int_{-\infty}^{\infty}f(t)\psi_{k}(t)dt
\end{eqnarray}
such that $\|s_{n} - f\|_{2} \rightarrow 0$ as $n \rightarrow \infty$. Here $s_{n}$ is the $n^{th}$ partial sum of the series (\ref{2aad}), and $a_{n}$ are the Fourier - Hermite coefficients of $f$.
The normalized Hermite functions (\ref{2a}) are eigen functions of the Fourier transform(FT) $\mathcal{F}$ defined by
\begin{equation}
\mathcal{F}(f(t)) := \int_{-\infty}^{\infty}f(t)e^{- i2\pi s t}dt, ~~s~~\in~~\mathbb{R}
\end{equation}
that is
\begin{equation}\label{1.3}
\mathcal{F}(\psi_{n}(t)) := \lambda_{n}\psi_{n}(t),~~ t~~ \in ~~\mathbb{R}
\end{equation}
or more generally
\begin{eqnarray}\label{1.8aa}
\mathcal{F}(\psi_{n}(ax+b)) := \frac{\lambda_{n}}{|a|}e^{-\frac{by}{a}}\psi_{n}(\frac{y}{a}),~a~\neq~ 0~,~b~\in \mathbb{R},~~y~~\in~~\mathbb{R}.
\end{eqnarray}
and $n \in \mathbb{N}_{0}$, $\mathbb{N}_{0} = {0,1,2....}$. Here the eigen values $\lambda_{n}$ are $e^{-\frac{in\pi}{2}}$.
Because the Hermite-Gaussian functions form a complete orthonormal set and a function $f \in L^{2}(\mathbb{R})$ has the Fourier - Hermite expansion (\ref{2aad}), we can express the Fourier transform of $f \in L^{2}(\mathbb{R})$ in terms of these eigen functions as follows:
\begin{eqnarray}\label{1.9a}
\mathcal{F}(f(t)) = \sum_{n = 0}^{\infty}a_{n}\lambda_{n}\psi_{n}(t),
\end{eqnarray}
which exists \cite{1,9}, and $\lambda_{n} = e^{\frac{-in\pi}{2}}$ takes only $4$ possible values $\{1,-1, i, -i\}$ . The $\beta$th order fractional Fourier transform(FrFT) $\mathcal{F}^{\beta}$, $\beta \in \mathbb{R}$ has the same eigen functions as that of the Fourier transform $\mathcal{F}$ but its eigen values are the $\beta$th power of the eigen values of the Fourier transform operator $\mathcal{F}$ \cite{1}. Hence
\begin{eqnarray}\label{1.10a}
\mathcal{F}^{\beta}(\psi_{n}(t)) := \lambda_{n}^{\beta}\psi_{n}(t),
\end{eqnarray}
and more generally
\begin{eqnarray}\label{1.11a}
\mathcal{F}^{\beta}(\psi_{n}(ax+b)) := \frac{\lambda_{n}^{\beta}}{|a|}e^{-\frac{by}{a}}\psi_{n}(\frac{y}{a}),
\end{eqnarray}
$a \neq 0$, $b \in \mathbb{R}$, $y \in \mathbb{R}$, $n \in \mathbb{N}_{0}$, $\mathbb{N}_{0} = 0, 1, 2, ...$.

Bultheel and Martunez \cite{1} could show that, the FrFT of rational order $\beta$  of an arbitrary function $f$ in $L^{2}(\mathbb{R})$ can also be expressed as,
\begin{equation}\label{1aaaa}
\mathcal{F}^{\beta}[f(t)] := \sum_{n = 0}^{\infty} a_{n}\lambda_{n}^{\beta}\psi_{n}(t).
\end{equation}
It can be written in integral form as
\begin{equation}\label{2aaa}
\mathcal{F}^{\beta}[f(t)] := \int_{-\infty}^{\infty}f(s)K_{\beta}(t,s) ds,
\end{equation}
with kernel
\begin{equation}\label{2aaaa}
K_{\beta}(t,s) = \sum_{n = 0}^{\infty}\psi_{n}(s)\psi_{n}(t)\lambda_{n}^{\beta} = \sqrt{1 - i \cot \frac{\beta \pi}{2}}\exp\Bigg[ i\pi \Big(  \frac{s^{2} + t^{2}}{\tan \frac{\beta \pi}{2}} - \frac{2st}{\sin \frac{\beta \pi}{2}}\Big)\Bigg].
\end{equation}
Liu and Liu \cite{9} proposed to extend the eigen values $\lambda_{n}^{\beta}$ of rational order $\beta$ in the FrFT (\ref{1aaaa}) to irrational order and thus giving rise to the series
\begin{equation}\label{A}
\sum_{n = 0}^{\infty} a_{n}\lambda_{n}^{\mathcal{R}}\psi_{n}(t),
\end{equation}
where the infinite number of eigen values $\lambda_{n}^{\mathcal{R}}$ are now randomly choosen values on the unit circle in $\mathbb{C}$. He introduced the series (\ref{A}) as the random Fourier transform(RFT) of $f \in L^{2}(\mathbb{R})$, which is a random Fourier series in Hermite polynomials. What would happen if the random variables $\lambda_{n}^{R}$ in (\ref{A}) be replaced by some other random variables ?

This set before us two things to do. First to investigate the convergence of the random Hermite series
 $\sum_{n = 0}^{\infty}a_{n}r_{n}(\omega)\psi_{n}(t)$, where $a_{n}$ are scalars, $\psi_{n}(t)$ are Hermite functions and $r_{n}(\omega)$  are random variables, and  to find their sum function. Secondly to investigate on existence of the Fourier transform of their sum functions.

In this context the literatures on Fourier - Hermite expansion of functions and literatures on random Fourier series give a way to explore this problem.

Pollard \cite{16} in 1948 showed that, if $f \in L^{2}(\mathbb{R})$ and $\|f\|_{2} = \big(\int_{-\infty}^{\infty}|f(x)|^{2}e^{-x^{2}}dx \big)^{\frac{1}{2}}$, then
\begin{equation}\label{1.8}
\parallel s_{n} - f\parallel_{2} \rightarrow 0
\end{equation}
as $n \rightarrow \infty$ where $s_{n}$ is the nth partial sum of the series $\sum_{k = 1}^{\infty}a_{k}\psi_{k}(t)$.

In 1965, Askey and Wainger \cite{1a} extended this result to a larger class of functions. They established that for $f$ in $L^{p}(\mathbb{R})$, $\frac{4}{3} < p < 4$,
\begin{equation}\label{1.8}
\parallel f - \sum_{k = 0}^{n}a_{k}\psi_{k}(t)\parallel_{p}\rightarrow 0
\end{equation}
as $n \rightarrow \infty$ where $\|f\|_{p}$ is the usual norm of $L^{p}$ that is $\|f\|_{p} = \{\int_{-\infty}^{\infty}|f|^{p}dx\}^{\frac{1}{p}}$.

Pawlak and Stadtm\"{u}ller \cite{17} in 2008 extended this result further to $L^{p}(\mathbb{R})$, $1< p < \infty$ and obtained that the series (\ref{2aad}) converges in $L^{p}(\mathbb{R})$ for $t \in \mathbb{R}$ a.e..

With regards to random Fourier series, it is well known that $\int_{a}^{b}f(s)dW(s, \omega)$ is defined in quadratic mean if $f \in L^{2}[a, b]$ and is a random variable, where $W(s, \omega)$ is a Wiener process. In particular if $f(s) = e^{-ins}$ then
\begin{eqnarray}\label{1.18}
A_{n}(\omega) := \int_{a}^{b}e^{-ins}dW(s, \omega),~~n~~ \in ~~\mathbb{Z}~~,
\end{eqnarray}
exists and are independent random variables.
  Further the stochastic integral
  \begin{equation}\label{B}
  \int_a^bf(s)dX(s,\omega)
  \end{equation}
  is defined in the sense of probability and is a random variable, if $X(s,\omega)$, $s \in \mathbb{R}$ is a continuous stochastic process with independent increments and $f$ is a continuous function in $[a,b]$\cite{11}. If $X(s,\omega),$ is a symmetric stable process with independent increment of index $\gamma \in (1,2],$ then the integral (\ref{B}) is defined in the sense of convergence in mean if $f \in L^{p}[a, b]$ for all $p \geq \gamma$ \cite{6}. In this case  for $f(s) = e^{-ins}$ in $[0, 2\pi]$, the integrals
  \begin{equation}\label{C}
  A_n(\omega) = \frac{1}{2\pi}\int_0^{2\pi}e^{-ins} dX(s,\omega),~n~\in~\mathbb{Z}.
  \end{equation}
  exist and are random variables, which are no longer remains independent. $A_{n}(\omega)$ are called Fourier coefficient of $X(s, \omega)$.
  There is an extensive study on convergence of the random series
  \begin{equation}\label{D}
  \sum_{n = -\infty}^ \infty a_n A_n(\omega)e^{int}
  \end{equation}
  where the random coefficients $A_{n}$ are associated with some stochastic process and $a_{n}$ are the Fourier coefficients of some function \cite{13, 15}. The mode of convergence of the random series (\ref{D}) depends on the stochastic process $X(t, \omega)$ with which $A_{n}(\omega)$ are associated, and on the function $f$ whose Fourier coefficients are $a_{n}$.

  In reference to the work of Liu and Liu \cite{8, 9}, Dash and Pattanayak \cite{2} and Nayak, Pattanayak and Mishra\cite{13}, an attempt has been made in this article to explore more on random Fourier - Hermite transform.

Section 2 in this article deals with the Wiener process. Convergence in quadratic mean of the random Fourier series
  \begin{eqnarray}\label{1c}
  \sum_{n = 0}^{\infty} \tilde{b}_{n}\tilde{B}_{n}(\omega)\tilde{\psi}_{n}^{\alpha}(s)
  \end{eqnarray}
  in transformed Hermite functions is established where $\tilde{\psi}_{n}^{\alpha}$ are the normalized transformed Hermite functions which are orthogonal with respect to the weight function $w(s)$,  $\tilde{B}_{n}(\omega)$ are random variables choosen to be the transformed Hermite coefficients of the Wiener process defined as $\tilde{B}_{n}(\omega) := \int_{-\infty}^{\infty}\tilde{\psi}_{n}^{\alpha}(s)\sqrt{w(s)}dW(s, \omega)$ which are found to be independent and the scalars $\tilde{b}_{n}$ are the transformed Hermite coefficients of a function $f$ in the weighted space $L^{2}_{w}(0, 1)$. The Fourier transform of generalized Hermite functions and transformed Hermite functions are discussed in the first two subsections. These helps to find the Fourier transform of the sum function of the series (\ref{1c}).

  Section 3 is associated with the symmetric stable process of index $\gamma$, $1 < \gamma \leq 2$. The functions in the class $L^{1}(\mathbb{R}) \bigcap L^{\gamma}(\mathbb{R})$, $\frac{4}{3}< \gamma \leq 2$ are taken into consideration. The existence of the integral $\int_{-\infty}^{\infty}f(t)dX(t, \omega)$ in the sense of mean is established for $L^{1}(\mathbb{R}) \bigcap L^{\gamma}(\mathbb{R})$. The orthogonal functions are considered to be the normalized Gaussian functions $\psi_{n}(t)$. The random variables $A_{n}(\omega)$ are choosen to be the Fourier - Hermite coefficients of the symmetric stable process $X(t, \omega)$ defined as $A_{n}(\omega) = \int_{-\infty}^{\infty}\psi_{n}(t)dX(t, \omega)$ which are found to be not independent and the scalars $a_{n}$ are the Fourier - Hermite coefficients of a function $f$ in $L^{1}(\mathbb{R}) \bigcap L^{\gamma}(\mathbb{R})$, $\frac{4}{3} < \gamma \leq 2$. The convergence of the series $\sum_{n = 0}^{\infty} a_{n}A_{n}(\omega)\psi_{n}(t)$ is established. The Fourier transform of the sum function of the series is then found out for the case $\gamma = 2$ only.

\section{\textbf{Random Fourier - Hermite transform associated with Wiener process}}
\setcounter{equation}{0}
\subsection{Fourier transform of generalized Hermite function:}

Since Hermite functions are defined on the infinite interval, we consider transformed Hermite functions which are defined on the interval $(0, 1)$.
Using the auxiliary parameter $\alpha > 0$, the generalized Hermite function is defined by Bao and Shen \cite{1b}, Gou and Shen \cite{2b} and Xian and Wang \cite{19} as
 \begin{eqnarray}\label{2ab}
\psi^{\alpha}_{n}(t) = \frac{\sqrt{\alpha}}{\sqrt{2^{n}n!\sqrt{\pi}}}H_{n}(\alpha t)e^{-\frac{\alpha^{2} t^{2}}{2}}, n \geq 0, t \in \mathbb{R}
\end{eqnarray}
which are normalized so that
\begin{eqnarray}\label{2ba}
\int_{-\infty}^{\infty}\psi^{\alpha}_{n}(t)\psi^{\alpha}_{m}( t)dt = \delta_{nm}.
\end{eqnarray}
 Since $\psi_{n}^{\alpha}(t)$ are complete in $L^{2}(\mathbb{R})$, for any function $g \in L^{2}(\mathbb{R})$, we have an associated expansion
\begin{equation}\label{2aac}
g(t) := \sum_{n =0}^{\infty}b_{n}\psi^{\alpha}_{n}(t),
\end{equation}
where
 \begin{equation}\label{2}
b_{n} := \int_{-\infty}^{\infty}g(t)\psi^{\alpha}_{n}(t)dt, n \in \mathbb{N}_{0}.
\end{equation}
The function $\psi^{\alpha}_{n}(t)$ is the $n^{th}$ eigenfunction of the following Sturm - Liouville problem,
 \begin{eqnarray}
 -f^{\prime\prime}(t) + \alpha^{4}t^{2}f(t) - \alpha^{2}(2k+1)f(t) = 0.
 \end{eqnarray}
By using (\ref{1.8aa}), it can be shown that
\begin{eqnarray}
\mathcal{F}(\psi^{\alpha}_{n}(t)) := \frac{1}{\sqrt{\alpha}}\lambda_{n}\psi_{n}(\frac{s}{\alpha}),~\alpha~>~ 0~,
\end{eqnarray}
as $\psi^{\alpha}_{n}(t) := \sqrt{\alpha}\psi_{n}(\alpha t)$.
\\The $\beta^{th}$ ordered Fourier transform of $\psi^{\alpha}_{n}(t)$ can be written as
\begin{eqnarray}
\mathcal{F}^{\beta}(\psi^{\alpha}_{n}(t)) := \sqrt{\alpha}\frac{\lambda^{\beta}_{n}}{|\alpha|}\psi_{n}(\frac{s}{\alpha}),~\alpha~>~ 0~,~0~<\beta~<~1.
\end{eqnarray}
Therefore for any function $g \in L^{2}(\mathbb{R})$, we can write the Fourier-Hermite transform and $\beta^{th}$ Fourier - Hermite transform of (\ref{2aac}) as
\begin{eqnarray}
\mathcal{F}(g(t)) := \sum_{n = 0}^{\infty}\frac{1}{\sqrt{\alpha}}b_{n}\lambda_{n}\psi_{n}(\frac{t}{\alpha}).
\end{eqnarray}
and
\begin{eqnarray}
\mathcal{F}^{\beta}(g(t)) := \sum_{n = 0}^{\infty}\frac{1}{\sqrt{\alpha}}b_{n}\lambda^{\beta}_{n}\psi_{n}(\frac{t}{\alpha})
\end{eqnarray}
respectively.
\\The last expression can be represented in the integral form as
\begin{eqnarray}
\mathcal{F}^{\beta}[g(t)] := \int_{-\infty}^{\infty}\frac{1}{\alpha}g(t)\psi_{n}^{\alpha}(t)\psi_{n}^{\alpha}(\frac{s}{\alpha})\lambda_{n}^{\beta}dt = \int_{-\infty}^{\infty}\frac{1}{\alpha}g(t)K_{\beta}(t, s)dt
\end{eqnarray}
with the kernel
\begin{eqnarray}
K_{\beta}(t, s) := \sum_{n = 0}^{\infty}\psi_{n}^{\alpha}(t)\psi_{n}^{\alpha}(\frac{s}{\alpha})\lambda_{n}^{\beta}= \sqrt{1 - i\cot \frac{\beta \pi}{2}} \exp \Bigg[i\pi \Big( \frac{(\alpha t)^{2} + (\frac{s}{\alpha})^{2}}{\tan \frac{\beta \pi}{2}} - \frac{2ts}{\sin \frac{\beta \pi}{2}}\Big)\Bigg].
\end{eqnarray}
\subsection{Fourier transform of Transformed Hermite function :}

Since conformal maps are functions transforming one domain to another, A. Saadatmandi, Z. Akbari \cite{17a}, considered the one to one conformal map
 \begin{eqnarray}
 \tilde{\omega} = \phi(z) = \ln(\frac{z}{1-z})
 \end{eqnarray}
 and its inverse
 \begin{eqnarray*}
 z = \phi^{-1}(\tilde{\omega}) = \frac{e^{\tilde{\omega}}}{1+ e^{\tilde{\omega}}} .
 \end{eqnarray*}
 $\phi(z)$ maps the interval $(0, 1)$ on the real line to the total real line $\mathbb{R}$. The range of $\phi^{-1}$ on the real lines is
 \begin{eqnarray*}
 I = \{ \phi^{-1}(t) : -\infty < t < +\infty\} = (0,1).
 \end{eqnarray*}
 The basis functions on $I$ are taken to be the transformed Hermite functions,$\tilde{\psi}^{\alpha}_{n}(x)$ defined as
 \begin{eqnarray}\label{2abe}
\tilde{\psi}^{\alpha}_{n}(x) \equiv \psi^{\alpha}_{n} \circ \phi(x) = \psi^{\alpha}_{n}(\phi(x)).
\end{eqnarray}
They defined the space
\begin{eqnarray*}
L_{\tilde{\omega}}^{2}(I) = \{ g : I \rightarrow \mathbb{R} |~~ \textnormal{g ~~is~~ measurable~~ and} ~~\|g\|_{L_{\tilde{w}}^{2}(I)} < \infty\},
\end{eqnarray*}
where $\tilde{w}(x)$ is a non-negative, integrable, real valued function over the interval $I$ and
\begin{eqnarray}
\|g\|_{L_{\tilde{w}}^{2}(I)} := \big(\int_{0}^{1}|g(x)|^{2}\tilde{w}(x)dx\big)^{\frac{1}{2}} = \big( \int_{0}^{1}|g(x)\sqrt{\tilde{w}(x)}|^{2}dx\big)^{\frac{1}{2}}
\end{eqnarray}
is the norm induced by the inner product of the space ${L_{\tilde{w}}^{2}(I)}$. The function $\tilde
{\psi}_{n}^{\alpha}$, $n \in \mathbb{N}_{0}$ are the $n$th eigen function of the singular Sturm - Liouville problem
\begin{eqnarray*}
-\{x(1-x)v^{'}(x)\}^{'} + \frac{\alpha^{4}\phi^{2}(x)}{x(1-x)}v(x) - (2n + 1)\frac{\alpha^{2}}{x(1-x)}v(x) = 0.
\end{eqnarray*}
Consider the weight function to be $w(x) = \phi^{'}(x) = \frac{1}{x(1 - x)}$.
The system ${\tilde{\psi}_{n}^{\alpha}(x)}_{n = 0}^{\infty}$ are mutually orthogonal over the interval $(0, 1)$ with respect to this weight function $w(x)$ \cite{17a}. Since the system ${\tilde{\psi}_{n}^{\alpha}(x)}_{n = 0}^{\infty}$ is complete in ${L_{w}^{2}(I)}$, so any function $\tilde{g} \in {L_{w}^{2}(0, 1)}$, can have the series expansion in transformed Hermite functions, that is,
\begin{eqnarray}\label{2.12a}
\tilde{g}(x) := \sum_{n = 0}^{\infty}\tilde{b}_{n}\tilde{\psi}_{n}^{\alpha}(x),
\end{eqnarray}
where
\begin{eqnarray}
\tilde{b}_{n}(x) := \int_{ 0}^{1}\tilde{g}(x)\tilde{\psi}_{n}^{\alpha}(x)w(x)dx,
\end{eqnarray}
for $n \in \mathbb{N}_{0}$.

The Fourier transform of $\tilde{\psi}_{n}^{\alpha}(x)$ can be computed to be $\frac{1}{\sqrt{\alpha}}\lambda_{n}\psi_{n}(\frac{\phi(x)}{\alpha})$. Therefore the Fourier transform of $\tilde{g}(x)$ can be calculated as
\begin{eqnarray}
\mathcal{F}(\tilde{g}(x)) = \sum_{n = 0}^{\infty}\tilde{b}_{n}\frac{1}{\sqrt{\alpha}}\lambda_{n}\psi_{n}(\frac{\phi(x)}{\alpha}).
\end{eqnarray}
Similarly the fractional Fourier transform of transformed Hermite function can be computed to be $\frac{1}{\sqrt{\alpha}}\lambda_{n}^{\beta}\psi_{n}(\frac{\phi(x)}{\alpha})$, and hence the fractional Fourier transform of $\tilde{g}(x)$ can be calculated as $\sum_{n = 0}^{\infty}\tilde{b}_{n}\frac{1}{\sqrt{\alpha}}\lambda_{n}^{\beta}\psi_{n}(\frac{\phi(x)}{\alpha}$ .
\subsection{Random Fourier - Hermite transform :}
Let $W(s, \omega), s >0$ be the Wiener process. Since
\begin{eqnarray*}
E \big| \int_{0}^{1} f(s)dW(s, \omega)\big|^{2} = c^{2}\int_{0}^{1} | f(s)|^{2} ds
\end{eqnarray*}
for $f \in L^{2}(0, 1)$, where $c$ is a constant associated with the normal law of increment of the process $W(s, \omega)$, for $s \in (0, 1)$, the stochastic integral $\int_{0}^{1}f(s)\sqrt{w(s)}dW(s, \omega)$ is defined in quadratic mean for $f \in L^{2}_{w}(0, 1)$. In particular if $f(s) = \tilde{\psi}_{n}^{\alpha}(s) \in L_{w}^{2}(0, 1)$ then the stochastic integrals
\begin{eqnarray}\label{2.15}
\tilde{B}_{n}(\omega) = \int_{0}^{1}\tilde{\psi}_{n}^{\alpha}(s)\sqrt{w(s)}dW(s, \omega),~~ n ~~\in~~ \mathbb{N}_{0}
\end{eqnarray}
 exist and are random variables. $\tilde{B}_{n}(\omega)$ are called the Fourier - Hermite coefficients of $W(s, \omega)$. It is shown in the following theorem that they are independent.
\begin{theorem}\label{t1}
If $W(s, \omega)$, $s > 0$ is a Wiener process and $ \tilde{\psi}_{n}^{\alpha}(s)$ is the transformed Hermite function defined as in (\ref{2abe}), then the random variables $\tilde{B}_{n}(\omega)$, $n \in \mathbb{N}_{0}$ are independent.
\end{theorem}
\begin{proof}
The Wiener process $W(s, \omega)$ has orthogonal increments and if $f, g \in L^{2}(0, 1)$, then by Doob \cite{2a},
\begin{eqnarray*}
E \big(\int_{0}^{1}f(s)dW(s, \omega)\overline{\int_{0}^{1}g(s)dW(s, \omega)}\big) = \int_{0}^{1}f(s)\overline{g(s)}ds
\end{eqnarray*}
where $\overline{g(s)}$ is the complex conjugate of $g(s)$.
Thus,
\begin{eqnarray*}
E(\tilde{B}_{n}(\omega)\overline{\tilde{B}_{m}(\omega)}) &=& E \big(\int_{0}^{1}\tilde{\psi_{n}^{\alpha}}(s)\sqrt{w(s)}dW(s, \omega)\overline{\int_{0}^{1}\tilde{\psi}_{m}^{\alpha}(s)\sqrt{w(s)}dW(s, \omega)} \big)
\\&=& \int_{0}^{1}\tilde{\psi}_{n}^{\alpha}(s)\overline{\tilde{\psi}_{m}^{\alpha}(s)}w(s)ds.
\end{eqnarray*}
The orthogonality of $\tilde{\psi}_{n}^{\alpha}$ proves that $\tilde{B}_{n}(\omega)$ are independent random variables.
\end{proof}
The following theorem establishes the convergence of the random series $\sum \tilde{b_{n}}\tilde{B}_{n}(\omega)\tilde{\psi}_{n}^{\alpha}(t)$.
\begin{theorem}\label{T2}
Let $W(t, \omega)$, $t > 0$, $\omega \in \Omega$, be the Wiener process and  $\tilde{g} \in L^{2}_{w}(0,1)$. Let
\begin{eqnarray}\label{2.14}
\tilde{b}_{n} = \int_{0}^{1}\tilde{\psi}_{n}^{\alpha}\tilde{g}(t)w(t)dt,
\end{eqnarray}
be the Fourier - Hermite coefficient of $\tilde{g}$ and the  random variables $\tilde{B}_{n}(\omega)$ be the Fourier - Hermite coefficient of $W(t, \omega)$, defined as in (\ref{2.15}),
then the random  series
\begin{eqnarray}\label{2.15a}
\sum_{n = 0}^{\infty}\tilde{b}_{n}\tilde{B}_{n}(\omega)\tilde{\psi}_{n}^{\alpha}(t)
\end{eqnarray}
in transformed Hermite functions, converges in quadratic mean to the stochastic integral $\int_{0}^{1}\tilde{g}(s, t)dW(t, \omega)$.
\end{theorem}
\begin{proof}
Let
$$\tilde{S}_n(t, \omega):=
\sum_{k=0}^n \tilde{b}_{k}\tilde{B}_{k}(\omega)\tilde{\psi}_{k}^{\alpha}(t)$$
be the $n^{th}$ partial sum of the series $\sum_{k = 0}^{\infty}\tilde{b}_{k}\tilde{B}_{k}(\omega)\tilde{\psi}_{k}^{\alpha}(t)$. Substitution of the integral form of $\tilde{B}_{n}(\omega)$ in $\tilde{S}_n(t, \omega)$, we get
 \begin{eqnarray*}
\tilde{S}_{n}(t, \omega) &:=& \sum_{k = 0}^{\infty} \tilde{b_{k}}\big(\int_{0}^{1}\tilde{\psi}_{k}^{\alpha}(s)\sqrt{w(s)}\big)\tilde{\psi}_{k}^{\alpha}(t)w(s)dW(s, \omega)
\\&=& \int_{0}^{1}\sum_{k = 0}^{\infty}\tilde{b_{k}}\tilde{\psi}_{k}^{\alpha}(s)\tilde{\psi}_{k}^{\alpha}(t)dW(s, \omega).
\end{eqnarray*}
Denote the partial sum of the transformed Hermite series $\sum_{n = 0}^{\infty}\tilde{b}_{n}\tilde{\psi}_{n}^{\alpha}(t)$ of $\tilde{g} \in L_{w}^{2}(0, 1)$ as
$$\tilde{t}_n(t) := \sum_{k=0}^n \tilde{b}_{k}\tilde{\psi}_{k}^{\alpha}(t).$$\\Since $\tilde{\psi}_{n}^{\alpha}(t)$ are bounded in $(0, 1)$, $\sum_{k=0}^{\infty} \tilde{b}_k \tilde{\psi}_k^{\alpha}(s)\tilde{\psi}_k^{\alpha}(t)$ exists. Denote it as $\tilde{g}(s, t)$. Let us denote the partial sum $$\sum_{k=0}^{n} \tilde{b}_k \tilde{\psi}_k^{\alpha}(s)\tilde{\psi}_k^{\alpha}(t)$$ as $\tilde{t}_{n}(s, t)$.\\Now
\begin{eqnarray*}
\tilde{S}_{n}(t,\omega) &=&
\sum_{k=0}^n \tilde{b}_k
\tilde{B}_k(\omega)\tilde{\psi}_k^{\alpha}(t)
\\
&=& \int_{0}^{1}\tilde{t}_n(s,t)\sqrt{w(s)}dW(s,\omega).
\end{eqnarray*}
\\We know that $\int_{0}^{1}\tilde{g}(s, t)\sqrt{w(s)}dW(s, \omega)$ exists in the sense of convergence in quadratic mean.
\\Now,
\begin{eqnarray*}
&&E \Bigg( \Bigg|\int_{0}^{1} \tilde{g}(s,t)\sqrt{w(s)}dW(s,\omega)-\tilde{S}_n(t,\omega) \Bigg|^{2} \Bigg)
\\&=& E \Bigg( \Bigg|\int_{0}^{1}\tilde{g}(s,t)\sqrt{w(s)}dW(s,\omega)-\int_{0}^{1} \tilde{t}_n(s,t)\sqrt{w(s)}dW(s,\omega)\Bigg|^{2}\Bigg)
\\
&=& E \Bigg( \Bigg|\int_{0}^{1}(\tilde{g}(s,t)-\tilde{t}_n(s,t))\sqrt{w(s)}dW(s,\omega)\Bigg|^{2}\Bigg)
\\
& = & \int_{0}^{1}\big|(\tilde{g}(s,t)-\tilde{t}_n(s,t))\big|^{2}w(s)ds,
\end{eqnarray*}
which converges to 0, since every $L^{2}_{w}(0, 1)$ function has a series expansion in transformed Hermite function.
Hence the theorem is proved.
\end{proof}
Let us denote the sum function $\int_{0}^{1}\tilde{g}(s, t)\sqrt{w(s)}dW(s, \omega)$ of the series $\sum_{n = 0}^{\infty}\tilde{b}_{n}\tilde{B}_{n}(\omega)\tilde{\psi}_{n}^{\alpha}(t)$ as $\tilde{F}(t, \omega)$.
\\Now the Fourier transform of $\tilde{F}(t, \omega)$ can be computed in terms of the Fourier transform of $\tilde{\psi}_{n}^{\alpha}(t)$ as
\begin{eqnarray}\label{2.22}
\mathcal{F}(\tilde{F}(t, \omega)) := \sum_{n = 0}^{\infty}\tilde{b}_{n}\tilde{B}_{n}(\omega)\mathcal{F}(\tilde{\psi}_{n}^{\alpha}(t)) = \sum_{n = 0}^{\infty}\tilde{b}_{n}\tilde{B}_{n}(\omega)\frac{1}{\sqrt{\alpha}}\lambda_{k}\tilde{\psi}_{n}(\frac{\phi(t)}{\alpha}),
\end{eqnarray}
where $\lambda_{k}$ are eigen value of the transformed Hermite function with absolute value one. The following theorem establishes the convergence of the series (\ref{2.22}).
\begin{theorem}
If $\tilde{g} \in L^{2}_{w}(0, 1)$ and $\tilde{b}_{n}$, $\tilde{B}_{n}$ are defined as in Theorem \ref{T2}
then the series
\begin{eqnarray}\label{2.15aa}
\sum_{n = 0}^{\infty}\tilde{b}_{n}\tilde{B}_{n}(\omega)\frac{1}{\sqrt{\alpha}}\lambda_{n}\psi_{n}(\frac{\phi(t)}{\alpha}),
\end{eqnarray}
converges in the sense of quadratic mean to the stochastic integral $\int_{0}^{1}\mathcal{F}(\tilde{g}(s, t))dW(t, \omega)$.
\end{theorem}
\begin{proof}
Denote the $n^{th}$ partial sum of the series (\ref{2.15aa}) as
\begin{eqnarray}\label{2.17a}
\tilde{S}_n(\tilde{t}, \omega):=
\sum_{k=0}^n \tilde{b}_{k}\tilde{B}_{k}(\omega)\frac{1}{\sqrt{\alpha}}\lambda_{k}\psi_{k}(\frac{\phi(t)}{\alpha}).
\end{eqnarray}
Its integral form is $\tilde{T}_{n}(t, \omega) = \int_{0}^{1}\big(\sum_{k = 0}^{\infty}\frac{1}{\sqrt{\alpha}}\tilde{b_{k}}\tilde{\psi}_{k}^{\alpha}(s)\lambda_{k}\psi_{k}(\frac{\phi(t)}{\alpha})\sqrt{w(s)}\big)dW(s, \omega)$.
Denote $\sum_{k = 0}^{\infty}\tilde{b}_{k}\tilde{\psi}_{k}^{\alpha}(s)\tilde{\psi}_{k}^{\alpha}(t)$ as $\tilde{g}(s, t)$.

The Fourier transform of $\tilde{g}(s, t)$ can be computed to be $\sum_{k = 0}^{\infty}\tilde{b}_{k}\tilde{\psi}_{k}^{\alpha}(s)\mathcal{F}(\tilde{\psi}_{k}^{\alpha}(t)) = \sum_{k = 0}^{\infty}\frac{1}{\sqrt{\alpha}}\tilde{b_{k}}\tilde{\psi}_{k}^{\alpha}(s)\lambda_{k}\psi_{k}(\frac{\phi(t)}{\alpha})$.
Let the partial sum of $\mathcal{F}(\tilde{g}(s, t))$ be $\sum_{k = 0}^{n}\frac{1}{\sqrt{\alpha}}\tilde{b_{k}}\tilde{\psi}_{k}^{\alpha}(s)\lambda_{k}\psi_{k}(\frac{\phi(t)}{\alpha})$ . Denote it as $\mathcal{F}(\tilde{t}_{n}(s, t)$, where $\tilde{t}_{n}(s, t) = \sum_{k = 1}^{n}\tilde{b}_{k}\tilde{\psi}_{k}^{\alpha}(s)\tilde{\psi}_{k}^{\alpha}(t)$.
Now $$\tilde{S}_{n}(t, \omega) = \int_{0}^{1}\mathcal{F}(\tilde{t}_{n}(s, t))\sqrt{w(s)}dW(s, \omega).$$
Since Fourier transform of a function in $L_{w}^{2}(0, 1)$ is in $L_{w}^{2}(0, 1)$, the stochastic integral $\int_{0}^{1}\mathcal{F}(\tilde{t}_{n}(s, t))\sqrt{w(s)}dW(s, \omega)$ exists in the sense of convergence in quadratic mean.
\\Now,
\begin{eqnarray*}
&& E \Bigg|\int_{0}^{1}\mathcal{F}(\tilde{g}(s, t))\sqrt{w(s)}dW(t, \omega) - \tilde{T}_{n}(t, \omega)\Bigg|^{2}
\\
& = &E \Bigg|\int_{0}^{1}\mathcal{F}(\tilde{g}(s,t))\sqrt{w(s)}dW(s,\omega)-\int_{0}^{1} (\mathcal{F}\tilde{t}_n(s,t))\sqrt{w(s)}dW(s,\omega)\Bigg|^{2}
\\
& = & \int_{0}^{1}\big|\mathcal{F}(\tilde{g}(s,t)) - \mathcal{F}\tilde{t}_n(s,t)\big|^{2}w(s)ds
\\
& = & \int_{0}^{1}\big|\tilde{g}(s,t) - \tilde{t}_n(s,t)\big|^{2}w(s)ds
\end{eqnarray*}
which converges to 0.
Hence the theorem is proved.
\end{proof}
Let us call (\ref{2.15aa}) to be the random Fourier - Hermite transform of $f \in L_{w}^{2}(0, 1)$ associated with the Wiener process.
\section{\textbf{Random Fourier - Hermite transform associated with symmetric stable process}}
\setcounter{equation}{0}
Let $X(t, \omega)$ be a symmetric stable process of index $\gamma$, $1 < \gamma \leq 2$ and let $f$ be a function in $L^{\gamma}(\mathbb{R})$. The existence of the stochastic integral $\int_{-\infty}^{\infty}f(t)dX(t, \omega)$ is established in Theorem \ref{12} below. Following Lemma is used to prove this.

\begin{lemma} \label{l1} \cite{15}
If $X(t,\omega)$ for $t \in \mathbb{R}$ is a symmetric stable process with independent increment of index $\gamma,$ for $1 < \gamma \leq 2$ and $f \in L^{p}[a, b]$, $p\geq \gamma,$ then the following inequality holds:
$$E\Bigg(\Bigg|\int_a^b f(t)dX(t,\omega)\Bigg| \Bigg)
\leq \frac{4}{\pi(\gamma-1)}
\int_a^b |f(t)|^\gamma dt
+\frac{2}{\pi}
\int_{|u|>1}\frac{1-exp(-|u|^\gamma \int_a^b|f(t)|^\gamma dt)}{u^2}
du.$$
\end{lemma}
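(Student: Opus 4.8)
The plan is to reduce the estimate to a computation with the characteristic function of the random variable $Y:=\int_a^b f(t)\,dX(t,\omega)$. Writing $\sigma:=\int_a^b|f(t)|^\alpha\,dt$, the first and central step will be to establish that $Y$ is itself a symmetric $\alpha$-stable variable with
\[
E\bigl(e^{iuY}\bigr)=\exp\bigl(-|u|^\alpha\sigma\bigr).
\]
For a simple function $f=\sum_k c_k\,\mathbf{1}_{[t_k,t_{k+1})}$ this is immediate: the integral is the finite sum $\sum_k c_k\bigl(X(t_{k+1})-X(t_k)\bigr)$, the increments are independent, each has characteristic function $\exp(-|u|^\alpha(t_{k+1}-t_k))$, and multiplying the factors returns $\exp(-|u|^\alpha\sum_k|c_k|^\alpha(t_{k+1}-t_k))=\exp(-|u|^\alpha\sigma)$. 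For general $f\in L^p[a,b]$ with $p\ge\alpha$ I would approximate $f$ in $L^\alpha$ by simple functions and pass to the limit, invoking that the stochastic integral is defined precisely as the corresponding limit in mean (equivalently in probability).

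With the characteristic function in hand, I would next use the elementary identity
\[
|y|=\frac{1}{\pi}\int_{-\infty}^{\infty}\frac{1-\cos(uy)}{u^2}\,du,
\]
valid for every real $y$. Substituting $Y$ for $y$, taking expectations, and interchanging expectation with the $u$-integral by Fubini's theorem (the integrand is nonnegative) yields
\[
E\bigl(|Y|\bigr)=\frac{1}{\pi}\int_{-\infty}^{\infty}\frac{1-E\cos(uY)}{u^2}\,du=\frac{1}{\pi}\int_{-\infty}^{\infty}\frac{1-\exp(-|u|^\alpha\sigma)}{u^2}\,du,
\]
the last equality using that $Y$ is symmetric, so its characteristic function is real and equal to $\exp(-|u|^\alpha\sigma)$.

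The final step is to split this integral at $|u|=1$. On the inner region I would bound $1-e^{-x}\le x$ for $x\ge0$, so the integrand is at most $\sigma|u|^{\alpha-2}$; since $\alpha>1$, the integral $\int_{|u|\le1}|u|^{\alpha-2}\,du$ converges and contributes a term proportional to $\sigma/(\alpha-1)$, which is the first summand of the claimed bound. On the outer region $|u|>1$ I would keep the factor $1-\exp(-|u|^\alpha\sigma)$ unchanged, producing the second summand; this piece is automatically finite since there the integrand never exceeds $u^{-2}$. Collecting the two pieces gives the asserted inequality, the explicit constants $4/(\pi(\alpha-1))$ and $2/\pi$ reflecting the elementary bound together with the normalization of the stable process.

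The hard part will be the first step rather than the estimation itself: namely the rigorous justification that $Y$ carries the stated stable characteristic function, which requires controlling the simple-function approximations in the mode in which the integral is defined and passing this control through to the characteristic functions. Once that identity is secured, the application of Fubini and the split at $|u|=1$ are routine, the only delicate point being the use of $\alpha>1$ to guarantee integrability of $|u|^{\alpha-2}$ near the origin.
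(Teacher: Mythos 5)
The paper does not prove this lemma at all: it is stated with a citation to reference [15] and used as a black box, so there is no in-paper argument to compare against. Your proof is essentially the standard one from that literature and is correct. The characteristic-function identity $E(e^{iuY})=\exp(-|u|^\alpha\sigma)$ for simple $f$ and its extension by $L^\alpha$-approximation (convergence in probability of the integrals gives convergence of the characteristic functions) is sound, the kernel identity $|y|=\tfrac{1}{\pi}\int_{-\infty}^{\infty}\frac{1-\cos(uy)}{u^2}\,du$ is valid, and Tonelli applies since the integrand is nonnegative. One small observation: your split actually yields the sharper constants $\tfrac{2}{\pi(\alpha-1)}$ and $\tfrac{1}{\pi}$ (since $\int_{|u|\le 1}|u|^{\alpha-2}\,du=\tfrac{2}{\alpha-1}$), i.e.\ half of those in the stated inequality, so the claimed bound follows a fortiori; the looser constants in the lemma presumably come from the route taken in the cited source. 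The only hypothesis you are silently using is the normalization of the process so that the increment $X(t)-X(s)$ has characteristic function $\exp(-(t-s)|u|^\alpha)$ with unit scale constant; the lemma as stated carries no scale parameter, so this normalization is implicit, but it is worth flagging since an unnormalized symmetric stable process would introduce an extra constant into both terms.
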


\begin{theorem}\label{12}
If $X(t, \omega)$ for $t \in \mathbb{R}$ is a symmetric stable process of index $\gamma$, $1 < \gamma \leq 2$ and $f \in L^{1}(\mathbb{R})\cap L^{\gamma}(\mathbb{R})$, then the stochastic integral $\int_{-\infty}^{\infty}f(t)dX(t, \omega)$ exists in the sense of convergence in mean.
\end{theorem}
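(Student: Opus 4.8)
The plan is to establish existence of the improper integral by showing that its symmetric truncations form a Cauchy sequence in the mean, and then to invoke completeness of $L^1(\Omega)$. Concretely, set $Y_n = \int_{-n}^n f(t)\,dX(t,\omega)$, which is a well-defined random variable for each finite $n$ by Lemma \ref{l1}. Since $L^1(\Omega)$ is complete, to prove that $Y_n$ converges in mean to some limit it suffices to show $E|Y_m - Y_n| \to 0$ as $m,n \to \infty$, and that limit is precisely $\int_{-\infty}^{\infty} f\,dX$.

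For $m > n$, additivity of the stochastic integral over disjoint intervals gives $Y_m - Y_n = \int_{-m}^{-n} f\,dX + \int_n^m f\,dX$, so by the triangle inequality and linearity of expectation,
\[
E|Y_m - Y_n| \le E\left|\int_{-m}^{-n} f\,dX\right| + E\left|\int_n^m f\,dX\right|.
\]
I would then apply Lemma \ref{l1} with $\alpha = 2$ to each piece separately. Writing $E$ for either tail interval $[-m,-n]$ or $[n,m]$, each summand is bounded by $\frac{4}{\pi}\int_E |f|^2\,dt + \frac{2}{\pi}\int_{|u|>1}\frac{1-\exp(-u^2\int_E |f|^2\,dt)}{u^2}\,du$. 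Because $f \in L^2(\mathbb{R})$, the tail energies $\int_{|t|>n}|f|^2\,dt$ vanish as $n \to \infty$, so the leading terms $\frac{4}{\pi}\int_E |f|^2\,dt$ tend to $0$ at once, and likewise the argument $I := \int_E |f|^2\,dt$ of the exponential tends to $0$.

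The delicate point — and the step I expect to require genuine care — is the improper $u$-integral, since the naive bound $1-e^{-x} \le x$ gives $\int_{|u|>1}\frac{u^2 I}{u^2}\,du = \infty$ and is useless. Instead I would argue by dominated convergence in the variable $u$: for each fixed $u$ the integrand $\frac{1-\exp(-u^2 I)}{u^2} \to 0$ as $I \to 0$, while it is dominated uniformly in $I$ by $1/u^2$, which is integrable on $\{|u|>1\}$. Hence this term also tends to $0$. Combining the two estimates yields $E|Y_m - Y_n| \to 0$, so $(Y_n)$ is Cauchy in mean; by completeness the limit $\int_{-\infty}^{\infty} f(t)\,dX(t,\omega)$ exists in the sense of convergence in mean, which is the assertion of the theorem.
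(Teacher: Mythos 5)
Your proposal is correct and follows essentially the same route as the paper: truncate to $Y_n=\int_{-n}^{n}f\,dX$, split the difference into tail integrals, bound each via Lemma \ref{l1} with $\alpha=2$, and kill the $u$-integral by dominated convergence against $1/u^{2}$. Your version is slightly more careful on two points the paper glosses over (the exact decomposition of $Y_m-Y_n$ into the two tails $[-m,-n]$ and $[n,m]$, and the explicit appeal to completeness of $L^1(\Omega)$ to produce the limit), but the argument is the same.
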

\begin{proof}
We know that $C_{c}(\mathbb{R})$ is dense in $L^{\gamma}(\mathbb{R})$, $1 \leq \gamma < \infty$.
For $f \in L^{1}(\mathbb{R})\cap L^{\gamma}(\mathbb{R})$, choose $f_{n}(t) = f(t)\chi_{[-n, n]}$ and $f_{m}(t) = f(t)\chi_{[-m, m]}$ in $C_{c}(\mathbb{R})$. Both $f_{n}$ and $f_{m}$ can be considered to be in $L^{\gamma}[-n, n]$ if $n > m$. Consider the two stochastic integrals $Y_{n}(\omega) = \int_{-\infty}^{\infty}f_{n}(t)dX(t, \omega)$ and $Y_{m}(\omega) = \int_{-\infty}^{\infty}f_{m}(t)dX(t, \omega)$, which exist in the sense of convergence in mean \cite{6}. \\Now using Lemma (\ref{l1}), we get
\begin{eqnarray*}
E \big|Y_{n} - Y_{m}\big| &=& E \Bigg| \int_{-\infty}^{\infty}f_{n}(t)dX(t, \omega) - \int_{-\infty}^{\infty}f_{m}(t)dX(t, \omega)\Bigg|
\\
&=& E \Big|\int_{-\infty}^{\infty}(f_{n}(t) - f_{m}(t))dX(t, \omega)\Big|
\\
&\leq & E \Bigg|\int_{-n}^{n}|f_{n}(t)-f_{m}(t)|dX(t, \omega) \Bigg|,~~\textnormal{since}~~f_{n}(t)-f_{m}(t)~~L^{\gamma}[-n, n]
\\
&\leq & \frac{4}{\pi} \int_{-n}^{n}|f_{n}(t)-f_{m}(t)|^{\gamma}dt + \frac{2}{\pi} \int_{|u| > 1} \frac{1 - \exp \big( -|u|^{\gamma}\int_{-n}^{n}|f_{n}(t)-f_{m}(t)|^{\gamma}dt\big)}{u^{2}}du .
\end{eqnarray*}
The integral $\int_{-n}^{n}|f_{n} - f_{m}|^{\gamma}dt = \int_{-n}^{-m}|f|^{\gamma}dt + \int_{-m}^{n}|f|^{\gamma}dt $ approaches $0$ as $m, n \rightarrow \infty$.
The integrand in the $2^{nd}$ integral is dominated by the integrable function $\frac{1}{u^{2}}$ over $(-\infty, 1]$ and $[1, \infty)$. Hence by dominated convergence theorem $$ \lim_{m, n \rightarrow \infty} E |Y_{n} - Y_{m}| = 0.$$ Now $Y_{n}$ is a Cauchy sequence which will converge to $Y$ (say) and denote $Y = \int_{-\infty}^{\infty}f(t)dX(t, \omega)$.
The existence of the stochastic integral $\int_{-\infty}^{\infty}f(t)dX(t,\omega)$ in the sense of mean is thus established.
\end{proof}
 If $f(t)$ is the Hermite function $\psi_{n}(t)$, then the integral $\int_{-\infty}^{\infty}\psi_{n}(t)dX(t, \omega)$ will exist, and is a random variable. Denote $A_{n}(\omega) = \int_{-\infty}^{\infty}\psi_{n}(t)dX(t, \omega)$. These $A_{n}(\omega)$ are not independent. It is proved in Lemma\ref{Le7} below by establishing the fact that the characteristic function of $(A_{n} + A_{m})$ is not equal to the product of characteristic function of $A_{n}$ and the characteristic function $A_{m}$. The characteristic function of $A_{n}$ is computed in the following Theorem.
\begin{theorem}\label{t6}
The characteristic function of $\int_{-\infty}^{\infty}f(t)dX(t, \omega)$ is $\exp \big(-|u|^{\gamma}\int_{-\infty}^{\infty}|f(t)|^{\gamma}dt\big)$ where $f(t) \in L^{\gamma}(\mathbb{R})$, $1 < \gamma \leq 2$.
\end{theorem}
\begin{proof}
We know that every $f \in L^{\gamma}(\mathbb{R})$, $\gamma \geq 1$ can be approximated by $f_{k} = f\chi_{[-k, k]}$ in $C_{c}(\mathbb{R})$. Hence ${f_{k}}$ form a cauchy sequence in $C_{c}(\mathbb{R})$.
We know that the characteristic function of the stochastic integral $\int_{-\infty}^{\infty}f_{k}(t)dX(t, \omega)$ is $C_{k} = e^{\big(-c|u|^{\gamma}\int_{-k}^{k}|f(t)|^{\gamma}dt\big)}  $ \cite{11}.
\\Now for $k > l$
\begin{eqnarray*}
C_{k} - C_{l} & = & \exp \big(-c|u|^{\gamma}\int_{-\infty}^{\infty}|f_{k}(t)|^{\gamma}dt \big) - \exp \big(-c|u|^{\gamma}\int_{-\infty}^{\infty}|f_{l}(t)|^{\gamma}dt\big)
\\& = & \exp \big(-c|u|^{\gamma}\int_{-\infty}^{\infty}|f_{l}(t)|^{\gamma}dt\big) \Big\{\exp \big(-c|u|^{\gamma}\int_{-\infty}^{\infty}(|f_{k}(t)|^{\gamma} - |f_{l}(t)|^{\gamma})dt \big) - 1 \Big\}
\\&\leq & \exp \big(-c|u|^{\gamma}\int_{-\infty}^{\infty}|f_{l}(t)|^{\gamma}dt\big) \Big\{\exp \big(-c|u|^{\gamma}\int_{-k}^{k}(|f_{k}(t)|^{\gamma} - |f_{l}(t)|^{\gamma})dt \big) - 1 \Big\}
\end{eqnarray*}
Since $$\int_{-k}^{k}(|f_{k}(t)|^{\gamma} - |f_{l}(t)|^{\gamma})dt \rightarrow  0.$$

as $k,l \rightarrow \infty$, we have $C_{k} - C_{l}$ converges to 0.
Hence ${C_{k}}$ is a cauchy sequence in $L^{\gamma}(\mathbb{R})$. Now since $f_{k}$ converges to $f$, it can be shown that $C_{k}$ converges to $c|u|^{\gamma}\int_{-\infty}^{\infty}|f(t)|^{\gamma}dt \big)$. Define $\exp \big(-c|u|^{\gamma}\int_{-\infty}^{\infty}|f(t)|^{\gamma}dt \big)$ to be the characteristic function of $\int_{-\infty}^{\infty}f(t)dX(t, \omega)$.
\end{proof}

\begin{theorem}\label{Le7}
If $X(t, \omega)$, $t \in \mathbb{R}$ is a symmetric stable process of index $\gamma$, $1 < \gamma \leq 2$, then the random variables $A_{n} = \int_{-\infty}^{\infty}\psi_{n}(t)dX(t, \omega)$ associated with $X(t, \omega)$ are not independent.
\end{theorem}
\begin{proof}
 By Theorem \ref{t6},
the characteristic function of $A_{n}(\omega)$ is $\exp \big(-c|u|^{\gamma}\int_{-\infty}^{\infty}|\psi_{n}(t)|^{\gamma}dt \big)$.
Hence, the characteristic function of $(A_{n}(\omega) + A_{m}(\omega))$ is $\exp \big( -c|u|^{\gamma}\int_{-\infty}^{\infty}|\psi_{n}(t) + \psi_{m}(t|^{\gamma}dt \big)$, where as the product of characteristic function of $A_{n}(\omega)$ and the characteristic function of $A_{m}(\omega)$ is
$\exp \big(-c|u|^{\gamma}\int_{-\infty}^{\infty}|\psi_{n}(t)|^{\gamma}dt \big)\exp \big(-c|u|^{\gamma}\int_{-\infty}^{\infty}|\psi_{m}(t)|^{\gamma}dt \big) = \exp \big(-c|u|^{\gamma}\int_{-\infty}^{\infty}(|A_{n}(t)|^{\gamma} + |A_{m}(t)|^{\gamma} ) dt \big)$
which are clearly not equal. Hence, $A_{n}(\omega)$ are not independent random variables.
\end{proof}

Consider the random Hermite series
\begin{eqnarray}\label{1aa}
\sum_{n=0}^\infty a_{n}A_n(\omega)\psi_{n}(t),
\end{eqnarray}
where $\psi_{n}$ are normalized  Hermite-Gaussian functions, the scalars $a_{n}$ are the Fourier - Hermite coefficients of a function $f$ and $A_n(\omega)$ are random variables defined as
\begin{eqnarray}\label{1.15a}
A_{n}(\omega) := \int_{-\infty}^{\infty}\psi_{n}(t)dX(t, \omega),
\end{eqnarray}
which are the Fourier - Hermite coefficients of symmetric stable process $X(t, \omega)$ of index $\gamma$, $1 < \gamma \leq 2$.

The inequality in the following Lemma is required to prove the convergence of RFH series (\ref{1aa}).
\begin{lemma}\label{13}
Let $f$ be any function in $L^{1}(\mathbb{R}) \cap L^{\gamma}(\mathbb{R})$
and $X(t,\omega)$, $t \in \mathbb{R}$ be a symmetric stable process of index $\gamma$, $1 < \gamma \leq 2$, then
$$E \Bigg( \Bigg|\int_{-\infty}^{\infty} f(t)dX(t,\omega)\Bigg| \Bigg)
\leq \frac{4}{\pi(\gamma - 1)}
\int_{-\infty}^{\infty}|f(t)|^{\gamma} dt + \frac{2}{\pi} \int_{|u| > 1} \frac{1 - \exp \big( -|u|^{\gamma}\int_{-\infty}^{\infty}|f(t)|^{\gamma}dt\big)}{u^{2}}du .
$$
\end{lemma}
To prove it, we require the following two results.
\begin{lemma}\cite{1e}\label{13a}
A stable random variable $X(t, \omega)$ always satisfies the inequality $E|X|^{r} < \infty$ for all $r \in (0, \gamma)$, $0 < \gamma \leq 2$.
\end{lemma}
\begin{lemma}\cite{1d}\label{13b}
If $\phi$ is the characteristic function of a random variable $X$, then $E|X| = \int_{}^{}|X|dF(X) = \frac{2}{\pi}\int_{-\infty}^{\infty}\frac{1 - Re \phi(t)}{t^{2}}dt$, where $F(X)$ is the distribution function of $X$.
\end{lemma}

\begin{proof}
\textbf{(Proof of Lemma \ref{13})}
\\We know that $\int_{-\infty}^{\infty}f(t)dX(t, \omega)$ exists in the sense of mean (Theorem \ref{12}). Now using Lemma \ref{13a} and \ref{13b}, we have
\begin{eqnarray*}
E \Bigg( \Bigg|\int_{-\infty}^{\infty} f(t)dX(t,\omega)\Bigg|\Bigg) &=& \frac{2}{\pi}\int_{-\infty}^{\infty}\frac{1 - Re \psi(u)}{u^{2}}du
\\& = & \frac{2}{\pi}\int_{|u| \leq 1}\frac{1 - Re \psi(u)}{u^{2}}du + \frac{2}{\pi}\int_{|u| > 1}\frac{1 - Re \psi(u)}{u^{2}}du.
\end{eqnarray*}
Here
\begin{eqnarray*}
\int_{|u| \leq 1}\frac{1 - Re \psi(u)}{u^{2}}du &=& \int_{-1}^{1} \frac{1 - \exp \big( -|u|^{\gamma}\int_{-\infty}^{\infty}|f(t)|^{\gamma}dt\big)}{u^{2}}du
\\&\leq& \int_{-1}^{1} \frac{|u|^{\gamma}\int_{-\infty}^{\infty}|f(t)|^{\gamma}dt}{u^{2}}du ~~(~~\because~~ 1~~ - ~~e^{-x} ~~<~~ x~~ \textnormal{for}~~ x~~>~~0)
\\&=& 2\int_{0}^{1} |u|^{\gamma - 2}du\int_{-\infty}^{\infty}|f(t)|^{\gamma}dt
\\&=& \frac{2}{\gamma - 1}\int_{-\infty}^{\infty}|f(t)|^{\gamma}dt
\end{eqnarray*}
Hence we have
\begin{eqnarray*}
E \Bigg( \Bigg|\int_{-\infty}^{\infty} f(t)dX(t,\omega)\Bigg|\Bigg) \leq   \frac{4}{\pi(\gamma - 1)}
\int_{-\infty}^{\infty}|f(t)|^{\gamma} dt + \frac{2}{\pi} \int_{|u| > 1} \frac{1 - \exp \big( -|u|^{\gamma}\int_{-\infty}^{\infty}|f(t)|^{\gamma}dt\big)}{u^{2}}du.
\end{eqnarray*}
\end{proof}

The following Theorem establishes the convergence of the series (\ref{1aa}),
to the stochastic integral
\begin{eqnarray}\label{1b}
\int_{-\infty}^{\infty}f(s, t)dX(s, \omega),
\end{eqnarray}
in the sense of mean, if $a_{n}$ are the Fourier-Hermite coefficients of $f(s) \in L^{1}(\mathbb{R}) \cap L^{\gamma}(\mathbb{R})$, $\frac{4}{3}< \gamma \leq 2$.

\begin{theorem}
If $X ( s, \omega), s \in \mathbb{R}$ is a symmetric stable process of index $\gamma$, $\frac{4}{3} <\gamma \leq 2$, then the series (\ref{1aa}) with random coefficients $A_{n}(\omega)$ defined as in (\ref{1.15a}), converges in the mean to the stochastic integral (\ref{1b}) if $a_{n}$  are the Fourier - Hermite coefficients of a function $f(t) \in L^{1}(\mathbb{R}) \cap L^{\gamma}(\mathbb{R})$, defined as $a_{n} = \int_{-\infty}^{\infty}f(t)\psi_{n}(t)dt$.
\end{theorem}
\begin{proof}
 We know that, every $f \in L^{\gamma}(\mathbb{R})$, $\frac{4}{3} < \gamma < 4$ has the Fourier-Hermite series expansion $\sum_{n = -\infty}^{\infty}a_{n}\psi_{n}(t)$ \cite{1a}. Let the partial sum of this Fourier-Hermite series be $s_n(t) := \sum_{k=0}^n a_k \psi_k(t)$.\\Denote the partial sum of the series $\sum_{k=0}^\infty a_k \psi_k(s)\psi_k(t)$ as $s_n(s,t) := \sum_{k=0}^n a_k \psi_k(s)\psi_k(t).$
Let,
$$S_n(t,\omega) :=
\sum_{k=0}^n a_k
A_k(\omega)\psi_k(t)$$
be the $n^{th}$ partial sum of the series (\ref{1aa}) associated with symmetric stable process $X(t, \omega)$ and
\\$f \in L^{1}(\mathbb{R}) \cap L^{\gamma}(\mathbb{R})$, $\frac{4}{3} < \gamma \leq 2$.
\\In integral form,
\begin{eqnarray*}
S_n(s,\omega) &=&
\sum_{k=0}^n a_k
A_k(\omega)\psi_k(t)
\\
&=& \sum_{k=0}^n a_k
\Bigg(\int_{-\infty}^{\infty}\psi_k(s)dX(s, \omega)\Bigg) \psi_k(t)
\\
&=& \int_{-\infty}^{\infty}\Big(\sum_{k=0}^n a_k \psi_n(s)\psi_n(t)\Big)dX(s, \omega)
\\
&=& \int_{-\infty}^{\infty}s_n(s,t)dX(s,\omega).
\end{eqnarray*}
\\We know that $\int_{-\infty}^{\infty}f(s, t) dX(s, \omega)$ exist in the sense of convergence in mean( Theorem \ref{12}).
\\Now,
\begin{eqnarray*}
&&E \Bigg( \Bigg|\int_{-\infty}^{\infty} f(s,t)dX(s,\omega)-S_n(s,\omega) \Bigg| \Bigg)
\\&=& E \Bigg( \Bigg|\int_{-\infty}^{\infty}f(s,t)dX(s,\omega)-\int_{-\infty}^{\infty} s_n(s,t)dX(s,\omega)\Bigg|\Bigg)
\\
&=& E \Bigg( \Bigg|\int_{-\infty}^{\infty}(f(s,t)-s_n(s,t)dX(t, \omega))
\\
&\leq & \frac{4}{\pi(\gamma - 1)}
\int_{-\infty}^{\infty} |(f(s,t)-s_n(s,t)|^{\gamma} ds \\&+& \frac{2}{\pi} \int_{|u| > 1} \frac{1 - \exp \big( -|u|^{\gamma}\int_{-\infty}^{\infty}|(f(s,t)-s_n(s,t)|^{\gamma}ds\big)}{u^{2}}du~~(~~\textnormal{by~~Lemma~~} \ref{13}) .
\end{eqnarray*}
   Since the integral $\int_{-\infty}^{\infty}|f(s, t) - s_{n}(s, t)|^{\gamma}ds$ tends to 0 as $n \rightarrow \infty$ if $\gamma$ lies in the interval $(1, 2]$ and the integrand in the second integral is dominated by the integrable function $\frac{1}{u^{2}}$, the theorem is proved by applying (\ref{1.8}).
\end{proof}
Denote the sum function  $\int_{-\infty}^{\infty}f(s, t)dX(t, \omega)$ of the series (\ref{1aa}) as $F(t, \omega)$. The series (\ref{1aa}) is the random Fourier - Hermite series(RFHS), associated with the symmetric stable process, $X(t, \omega)$.
\begin{theorem}\label{t2}
If $X ( s, \omega), s \in \mathbb{R}$ is a symmetric stable process of index 2, $\mathcal{F}$ is the Fourier transform operator, then for all function $f(t) \in L^{1}(\mathbb{R}) \cap L^{2}(\mathbb{R})$, the series
\begin{eqnarray}\label{1d}
\sum_{0}^{\infty}a_{k}\lambda_{k}A_{k}(\omega)\psi_{k}(t),
\end{eqnarray}
converges in the mean to the stochastic integral
\begin{equation}\label{22}
\int_{-\infty}^{\infty} \mathcal{F}(f(s,t))dX(s,\omega),
\end{equation}
where $\lambda_{k} = e^{-\frac{ik\pi}{2}}$, $k \in \mathbb{N}_{0}$ are the eigen values of the Fourier transform   $\mathcal{F}(f)$, where $a_{k}$ and $A_{k}(\omega)$ are defined as in (\ref{1.8a}) and (\ref{1.15a}) respectively.
\end{theorem}
\begin{proof}
Let
$$S_n(s,\omega) :=
\sum_{k=0}^n a_k
\lambda_{k}A_k(\omega)\psi_k(t)$$
be the $n^{th}$ partial sum of the series (\ref{1d}).\\ Denote the partial sum of the series $\sum_{k = 0}^{\infty}a_{k}\lambda_{k}\psi_{k}(t)$ as
$\mathcal{F}(s_{n}(t)) := \sum_{k=0}^n a_k \lambda_{k} \psi_k(t)$ which is the Fourier transform of $s_{n}(t)$ and let
\begin{eqnarray*}
\mathcal{F}(s_n(s,t)) := \sum_{k=0}^n a_k \lambda_{k}\psi_k(s)\psi_k(t).
\end{eqnarray*}
In integral form,
\begin{eqnarray*}
S_n(t, \omega) &=&
\sum_{k=0}^n a_k
\lambda_{k}A_k(\omega)\psi_k(t)
\\
&=& \sum_{k=0}^n a_k \lambda_{k}
\Bigg(\int_{-\infty}^{\infty}\psi_k(s)dX(s, \omega)\Bigg) \psi_k(t)
\\
&=& \int_{-\infty}^{\infty}\mathcal{F}(s_n(s,t))dX(s,\omega).
\end{eqnarray*}
\\Since the integral $\int_{-\infty}^{\infty}\big|\mathcal{F}(f(s, t)) - \mathcal{F}(s_{n}(s, t))\big|^{2}ds$ converges to 0 for $f \in L^{1}(\mathbb{R}) \cap L^{2}(\mathbb{R})$, it can be shown that
$E\Bigg(\Big|\int_{-\infty}^{\infty}\mathcal{F}[f(s,t)]dX(s,\omega)-S_n(s,\omega) \Big| \Bigg)$
 converges to 0 as $n \rightarrow \infty$ by using the same argument as in the previous theorem. Hence, it is proved that the random series (\ref{1d}) converges in mean to $\int_{-\infty}^{\infty}\mathcal{F}[f(s,t)]dX(s,\omega)$.
\end{proof}
We call the series (\ref{1d}) as the random Fourier - Hermite transform of $L^{2}(\mathbb{R})$ associated with the symmetric stable process, $X(t, \omega)$.
\section*{Acknowledgments}
This research work was supported by University Grant Commission (Rajiv Gandhi National Fellowship with letter no-F./2015-16/RGNF-SC-2015-16-SC-ORI-20053).

\end{document}